\newtheorem{theorem}{Theorem}[section]
\newtheorem{lemma}[theorem]{Lemma}
\newtheorem{proposition}[theorem]{Proposition}
\newtheorem{corollary}[theorem]{Corollary}
\theoremstyle{definition}
\newtheorem{example}[theorem]{Example}
\theoremstyle{remark}
\newtheorem{remark}[theorem]{Remark}
\numberwithin{equation}{section}
\begin{document}

\title[On the structure of complete k\"ahlerian manifolds]{On the structure of complete k\"ahlerian manifolds furnished with closed conformal vector fields}

\author{A. Caminha$^1$}
\address{$^1$Departamento de Matem\'atica, Universidade Federal do Cear\'a, Fortaleza,
Cear\'a, Brazil. 60455-760}
\email{caminha@mat.ufc.br}

\thanks{The author wishes to thank professor Jorge V. Pereira for some helpful conversations.}

\subjclass[2000]{Primary 53C42; Secondary 53B30, 53C50, 53Z05, 83C99}

\keywords{K\"ahlerian metrics, closed conformal vector fields, geometric foliations}

\begin{abstract}
We show that if a connected compact k\"ahlerian surface $M$ with nonpositive gaussian curvature is furnished with a closed conformal vector field $\xi$ whose singular points are isolated, then $M$ is isometric to a flat torus and $\xi$ is parallel. We also consider the case of a connected complete k\"ahlerian manifod $M$ of complex dimension $n>1$ and furnished with a nontrivial closed conformal vector field $\xi$. In this case, it is well known that the singularities of $\xi$ are automatically isolated and the nontrivial leaves of the distribution generated by $\xi$ and $J\xi$ are totally geodesic in $M$. Assuming that one such leaf is compact, has torsion normal holonomy group and that the holomorphic sectional curvature of $M$ along it is nonpositive, we show that $\xi$ is parallel and $M$ is foliated by a family of totally geodesic isometric tori and also by a family of totally geodesic isometric complete k\"ahlerian manifolds of complex dimension $n-1$. In particular, the the universal covering of $M$ is isometric to a riemannian product having $\mathbb R^2$ as a factor. We also present a generic example showing that one cannot get rid of the hypothesis on the nonpositivity of the holomorphic sectional curvature along at least one such leaf. 
\end{abstract}

\maketitle

\section{Introduction}

The geometry of submanifolds of riemannian spaces in which either the submanifold or the ambient space is furnished with a closed conformal vector field has been the object of intense research in recent years. In this sense, a small collection of interesting works is the one composed by the papers \cite{Alias:12}, \cite{Caminha:09}, \cite{Lima:13}, \cite{Montiel:99}, \cite{Pitis:05} and \cite{Urbano:98} and the references therein.

In what concerns k\"ahlerian manifolds, a canonical class of examples having closed conformal vector fields is that of the warped products $M^n=I\times_fN^{2n-1}$ ($n$ standing for the {\em complex} dimension of $M$), where $N$ is a $(2n-1)-$dimensional sasakian manifold; in this case, $\xi=t\partial_t$ is closed conformal and $\psi=1$ is its conformal factor. Another one is that of the riemannian products $M^n=N^{n-1}\times\mathbb T$, where $N$ is a k\"ahlerian manifold of complex dimension $n-1$ and $\mathbb T$ is a flat torus of complex dimension $1$; in this case, the conformal vector field $\xi$ is the lift, to $M$, of a parallel vector field in $\mathbb T$, and $\psi=0$ is its conformal factor. 

In both of these classes of examples, the holomorphic sectional curvature of $M$ in the direction of $\xi$ vanishes identically and, if $J$ stands for the quasi-complex structure of $M$, then the leaves of the distribution generated by $\xi$ and $J\xi$ are totally geodesic in $M$. However, in the first class such leaves are noncompact, whereas in the second class they are compact.

Moreover, as we point out at the end of the paper, if $M^n$ is a compact complex symmetric space of complex dimension $n$, positive scalar curvature, vanishing first De Rham cohomology group and not isometric to $\mathbb S^{2n}$, then $M$ does not possess a nontrivial closed conformal vector field. In particular, connected compact k\"ahlerian manifols of positive constant holomorphic sectional curvature do not possess closed conformal vector fields. Thus, it seems that the presence of positive holomorphic sectional curvature tends to avoid the existence of nontrivial closed conformal vector fields.

The purpose of this paper is to show that, under a reasonable set of conditions on $\xi$, the second class of examples presented in the second paragraph is essentially the only one. More precisely, we first show that if a connected compact k\"ahlerian surface $M$, with nonpositive gaussian curvature, is furnished with a closed conformal vector field $\xi$ whose singular points are isolated, then $M$ is isometric to a flat torus and $\xi$ is parallel. We then consider the case of a connected complete k\"ahlerian manifod $M^n$, of complex dimension $n>1$ and furnished with a nontrivial closed conformal vector field $\xi$. In this case, it is a well known fact that the singularities of $\xi$ are automatically isolated and the nontrivial leaves of the distribution generated by $\xi$ and $J\xi$ are totally geodesic in $M$. Assuming that one such leaf is compact, has torsion normal holonomy group and that the holomorphic sectional curvature of $M$ along it is nonpositive, we show that $\xi$ is parallel and $M$ is foliated by a family of totally geodesic isometric tori, and also by a family of totally geodesic, isometric complete k\"ahlerian manifolds of complex dimension $n-1$. In particular, the the universal covering of $M$ is isometric to a riemannian product having $\mathbb R^2$ as a factor.

\section{Deforming k\"ahlerian metrics}

In the sequel, we let $(M^n,J,g)$ be a $n-$dimensional hermitian manifold, where $n$ stands for its complex dimension (hence, $M$ has real dimension $2n$). We also let $\omega\in\Omega^2(M)$ denote the corresponding k\"ahlerian form,
so that $\omega(X,Y)=\langle JX,Y\rangle$ for all $X,Y\in\mathfrak X(M)$. It is a standard fact 
that $M$ is a k\"ahlerian manifold if and only if $J$ is parallel with respect to the Levi-Civita connection $\nabla$ of $g$, i.e., if and only if
\begin{equation}\label{eq:parallelism of J in Kahler manifolds}
\nabla J=0.
\end{equation}

Whenever convenient, we write $g=\langle\cdot,\cdot\rangle$ and let $|\cdot|$ denote the corresponding norm. Also, for $X\in\mathfrak X(M)$, we let $\theta_X$ denote the $1-$form metrically dual to $X$, i.e., such that $\theta_X(Y)=\langle X,Y\rangle$ for $Y\in\mathfrak X(M)$; we also let $\theta_X^2$ denote the symmetrization of $\theta_X\otimes\theta_X$, i.e., the covariant symmetric $2-$tensor field on $M$ such that
$$\theta_X^2(Y,Z)=\theta_X(Y)\theta_X(Z)$$
for $Y,Z\in\mathfrak X(M)$.

The following result shows how to construct, out of $g$, a new k\"ahlerian metric on $(M,J)$. To this end, we recall that a vector field $\xi$ on $(M,g,\nabla)$ is said to be {\em closed conformal} if there exists a smooth function $\psi$ on $M$ (the {\em conformal factor} of $\xi$) such that
$$\nabla_X\xi=\psi X$$
for all $X\in\mathfrak X(M)$. If $\xi$ is closed conformal, then a straightforward computation 
shows that $\theta_{\xi}$ is a closed $1-$form.

\begin{theorem}\label{thm:deformacao de metricas Kahler por campo conforme fechado}
 Let $(M,J,g=\langle\cdot,\cdot\rangle)$ be a k\"ahlerian manifold with Levi-Civita connection $\nabla$, and $\xi\in\mathfrak X(M)$ be a closed conformal vector field on $M$. 
 If $|\xi|^2<c$ on $M$, for some positive constant $c$, and $\mu=(c-|\xi|^2)^{-1}$,
then the covariant symmetric $2-$tensor field
\begin{equation}\label{eq:deformacao de metricas Kahler por campo conforme fechado}
\tilde g=\mu g+\mu^2(\theta_{\xi}^2+\theta_{J\xi}^2)
\end{equation}
defines another k\"ahlerian metric on $(M,J)$. 
\end{theorem}

\begin{proof}
The $2-$tensor $\tilde g$ is clearly positive definite, and thus defines a riemannian metric on $M$. On the other hand, for $X,Y\in\mathfrak X(M)$, the hermitian character of $\langle\cdot,\cdot\rangle$ gives
\begin{equation}\nonumber
\begin{split}
\tilde g(JX,JY)&\,=\mu\langle JX,JY\rangle+\mu^2(\theta_{\xi}^2(X,Y)+\theta_{J\xi}^2(X,Y))\\
&\,=\mu\langle X,Y\rangle+\mu^2(\langle\xi,JX\rangle\langle\xi,JY\rangle+\langle J\xi,JX\rangle\langle J\xi,JY\rangle)\\
&\,=\mu\langle X,Y\rangle+\mu^2(\langle J\xi,X\rangle\langle J\xi,Y\rangle+\langle\xi,X\rangle\langle \xi,Y\rangle)\\
&\,=\tilde g(X,Y),
\end{split}
\end{equation}
so that $\tilde g$ is also hermitian with respect to $J$.

Next, let $\tilde\omega$ be the k\"ahlerian form of $\tilde g$. For $X,Y\in\mathfrak X(M)$, we have
\begin{equation}\nonumber
\begin{split}
\tilde\omega(X,Y)&\,=\tilde g(JX,Y)=\mu\langle JX,Y\rangle+\mu^2(\theta_{\xi}^2(JX,Y)+\theta_{J\xi}^2(JX,Y))\\
&\,=\mu\omega(X,Y)+\mu^2(\langle\xi,JX\rangle\langle\xi,Y\rangle+\langle J\xi,JX\rangle\langle J\xi,Y\rangle)\\
&\,=\mu\omega(X,Y)+\mu^2(-\theta_{J\xi}(X)\theta_{\xi}(Y)+\theta_{\xi}(X)\theta_{J\xi}(Y))\\
&\,=\mu\omega(X,Y)+\mu^2(\theta_{\xi}\wedge\theta_{J\xi})(X,Y)
\end{split}
\end{equation}
and, hence,
\begin{equation}\label{eq:auxiliar 1 para deformacao de metricas Kahler por campo conforme fechado}
\tilde\omega=\mu\omega+\mu^2\theta_{\xi}\wedge\theta_{J\xi}.
\end{equation}
Since $\omega$ e $\theta_{\xi}$ are closed, it follows that
\begin{equation}\label{eq:auxiliar 2 para deformacao de metricas Kahler por campo conforme fechado}
d\tilde\omega=d\mu\wedge\omega+2\mu d\mu\wedge\theta_{\xi}\wedge\theta_{J\xi}-\mu^2\theta_{\xi}\wedge d\theta_{J\xi}.
\end{equation}

Letting $\psi$ be the conformal factor of $\xi$ and $X\in\mathfrak X(M)$, it follows that
\begin{equation}\label{eq:auxiliar 4 para deformacao de metricas Kahler por campo conforme fechado}
\begin{split}
d\mu(X)&\,=X(\mu)=(c-|\xi|^2)^{-2}X\langle\xi,\xi\rangle\\
&\,=2\mu^2\langle\nabla_X\xi,\xi\rangle=2\mu^2\langle\psi X,\xi\rangle\\
&\,=2\psi\mu^2\theta_{\xi}(X)
\end{split}
\end{equation}
and, hence,
\begin{equation}\label{eq:auxiliar 3 para deformacao de metricas Kahler por campo conforme fechado}
d\mu=2\psi\mu^2\theta_{\xi}.
\end{equation}
Therefore, for $X,Y\in\mathfrak X(M)$, Koszul's formula and (\ref{eq:parallelism of J in Kahler manifolds}) give
\begin{equation}\nonumber
 \begin{split}
d\theta_{J\xi}(X,Y)&\,=X(\theta_{J\xi}(Y))-Y(\theta_{J\xi}(X))-\theta_{J\xi}([X,Y])\\
&\,=X\langle J\xi,Y\rangle-Y\langle J\xi,X\rangle-\langle J\xi,[X,Y]\rangle\\
&\,=\langle\nabla_XJ\xi,Y\rangle-\langle\nabla_YJ\xi,X\rangle\\
&\,=\langle J\nabla_X\xi,Y\rangle-\langle J\nabla_Y\xi,X\rangle\\
&\,=\langle J(\psi X),Y\rangle-\langle J(\psi Y),X\rangle\\
&\,=2\psi\langle JX,Y\rangle=2\psi\omega(X,Y),
 \end{split}
\end{equation}
i.e., $d\theta_{J\xi}=2\psi\omega$. Substituting the above expressions for $d\mu$ e $d\theta_{J\xi}$ in (\ref{eq:auxiliar 2 para deformacao de metricas Kahler por campo conforme fechado}) and taking into account
that $\theta_{\xi}\wedge\theta_{\xi}=0$, we finally get
$$d\tilde\omega=2\psi\mu^2\theta_{\xi}\wedge\omega+4\psi\mu^3\theta_{\xi}\wedge\theta_{\xi}\wedge\theta_{J\xi}-\mu^2\theta_{\xi}\wedge(2\psi\omega)=0.$$
\end{proof}

Our next result gives a reasonable set of conditions under which $(M,\tilde g)$ is a complete riemannian manifold. 

\begin{proposition}\label{prop:completeness of the deformed metric}
Under the hypotheses of Theorem \ref{thm:deformacao de metricas Kahler por campo conforme fechado}, suppose that the conformal factor $\psi$ of $\xi$ is bounded and does not vanish outside a compact subset of $M$. If $|\xi|^2:M\rightarrow[0,+\infty)$ is proper and such that $\sup_M|\xi|^2=c$, then $(M,\tilde g)$ is complete.
\end{proposition}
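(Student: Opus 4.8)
The plan is to reduce the statement to a standard completeness criterion: a Riemannian manifold admitting a proper smooth function with bounded gradient is complete. Precisely, if $\varphi\colon M\to\mathbb R$ is smooth and proper with $C:=\sup_M|d\varphi|_{\tilde g}<+\infty$, then integrating $d\varphi$ along curves shows that $\varphi$ is $C$-Lipschitz for the Riemannian distance $d_{\tilde g}$; hence a $\tilde g$-Cauchy sequence has bounded $\varphi$-image, so it lies in a compact sublevel set of $\varphi$ and converges, and metric completeness then yields geodesic completeness by the Hopf--Rinow theorem. So it suffices to exhibit such a $\varphi$ on $(M,\tilde g)$, and the natural candidate is $\varphi:=\log\mu=-\log(c-|\xi|^2)$.

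First I would verify that $\varphi$ is proper. Given $b\in\mathbb R$, the sublevel set $\{\varphi\le b\}=\{|\xi|^2\le c-e^{-b}\}$ is compact because $|\xi|^2$ is proper (note that $c-e^{-b}<c$); since $\varphi$ is in addition bounded below by $-\log c$, preimages under $\varphi$ of compact subsets of $\mathbb R$ are compact. Moreover $\sup_M\varphi=-\log(c-\sup_M|\xi|^2)=+\infty$, so $\varphi$ is a genuine exhaustion function, reaching infinity precisely along the end where $|\xi|^2\to c$.

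The substantive step is the gradient bound. Differentiating $\varphi=\log\mu$ and invoking $d\mu=2\psi\mu^2\theta_\xi$ from (\ref{eq:auxiliar 3 para deformacao de metricas Kahler por campo conforme fechado}), we get $d\varphi=\mu^{-1}d\mu=2\psi\mu\,\theta_\xi$. To compute $|d\varphi|_{\tilde g}$ I would diagonalize $\tilde g$ pointwise. At a point where $\xi\ne0$, the vectors $e_1=\xi/|\xi|$ and $e_2=J\xi/|\xi|$ are $g$-orthonormal, since $J$ is a $g$-isometry and $\langle J\xi,\xi\rangle=0$; completing them to a $g$-orthonormal frame $e_1,\dots,e_{2n}$ with dual coframe $e^1,\dots,e^{2n}$, one has $\theta_\xi=|\xi|\,e^1$ and $\theta_{J\xi}=|\xi|\,e^2$, so that, using $1+\mu|\xi|^2=c\mu$,
$$\tilde g=(\mu+\mu^2|\xi|^2)\big((e^1)^2+(e^2)^2\big)+\mu\sum_{i=3}^{2n}(e^i)^2=c\mu^2\big((e^1)^2+(e^2)^2\big)+\mu\sum_{i=3}^{2n}(e^i)^2.$$
Since $d\varphi$ is a multiple of $e^1$, it lies in the block on which $\tilde g^{-1}$ acts by the scalar $(c\mu^2)^{-1}$, whence
$$|d\varphi|_{\tilde g}^2=\frac{1}{c\mu^2}\,|d\varphi|_g^2=\frac{(2\psi\mu)^2|\xi|^2}{c\mu^2}=\frac{4\psi^2|\xi|^2}{c}\le 4\psi^2$$
(the identity being trivially valid at zeros of $\xi$ as well, where both sides vanish). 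As $\psi$ is bounded, $\sup_M|d\varphi|_{\tilde g}\le 2\sup_M|\psi|<+\infty$, and the criterion above completes the proof.

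The only mildly delicate point is the inversion of $\tilde g$ in the middle display, but this is harmless once one notes that $\tilde g-\mu g$ has rank two, with $\xi$ and $J\xi$ spanning the eigenspace for the eigenvalue $\mu^2|\xi|^2$. Note that this argument uses only the boundedness of $\psi$, together with the properness of $|\xi|^2$ and $\sup_M|\xi|^2=c$; the remaining hypothesis, that $\psi$ not vanish outside a compact subset, does not seem to enter the estimate and is presumably there to prevent degeneracies in the hypotheses — e.g. $\xi$ being parallel, hence $|\xi|$ locally constant, on an unbounded region, a situation that would anyway be incompatible with the properness of $|\xi|^2$.
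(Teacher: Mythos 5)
Your proof is correct, and it packages the argument differently from the paper. The paper works with the divergent curve lemma: from $\tilde g(v,v)\geq(\mu\langle\xi,v\rangle)^2$ it writes, along a divergent curve $\gamma$, $\langle\xi,\gamma'\rangle=\psi^{-1}\langle\xi,\nabla_{\gamma'}\xi\rangle=\frac{1}{2\psi}\frac{d}{ds}|\xi\circ\gamma|^2$ and integrates, getting $\tilde\ell(\gamma_{|[0,t]})\geq\frac{1}{2\alpha}\bigl|\log(c-|\xi(\gamma(t_0))|^2)-\log(c-|\xi(\gamma(t))|^2)\bigr|\rightarrow+\infty$. In substance this is the same estimate as yours: both arguments bound the $\tilde g$-length of a curve from below by $(2\sup_M|\psi|)^{-1}$ times the total variation of $\varphi=-\log(c-|\xi|^2)$ along it. The real difference is where $\psi$ sits. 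The paper divides by $\psi$ to recover $\langle\xi,\gamma'\rangle$ from $\frac{d}{ds}|\xi|^2$, and this is precisely why it assumes $\psi$ does not vanish outside a compact set (so the identity is usable on the tail of the curve); your route keeps $\psi$ in the numerator via $d\varphi=2\psi\mu\,\theta_\xi$ and the exact diagonalization $\tilde g=c\mu^2\bigl((e^1)^2+(e^2)^2\bigr)+\mu\sum_{i\geq 3}(e^i)^2$, so that hypothesis is genuinely never used and you have in fact proved the proposition with it deleted. (Your closing speculation is thus slightly off: the hypothesis is there because the paper's own derivation needs it, not to exclude a degenerate configuration.) The outer framing, a proper function with bounded $\tilde g$-gradient is a Lipschitz exhaustion, hence completeness via Hopf--Rinow, is an equivalent and equally standard substitute for the divergent curve lemma; your sharper computation $|\theta_\xi|^2_{\tilde g}=|\xi|^2/(c\mu^2)$ also refines the paper's one-line lower bound for $\tilde g(v,v)$, though either suffices.
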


\begin{proof}
Let $\tilde\ell(\cdot)$ denote length with respect to $\tilde g$. Standard riemannian geometry assures that 
it suffices to show that if a smooth curve $\gamma:[0,+\infty)\rightarrow M$ escapes from all compact subsets of $M$, then $\tilde\ell(\gamma)=+\infty$. To this end, we start by observing that
$$\tilde g(v,v)=\mu g(v,v)+\mu^2(\langle\xi,v\rangle^2+\langle J\xi,v\rangle^2)\geq(\mu\langle\xi,v\rangle)^2.$$

Let $K\subset M$ be a compact set such that $\psi\neq 0$ on $K^c$, and $t_0>0$ such that $\gamma(t)\notin K$ for $t>t_0$. Letting $\sup_M|\psi|=\alpha<+\infty$, we estimate
\begin{equation}\nonumber
 \begin{split}
\tilde\ell(\gamma_{|[0,t]})&\,\geq\int_{t_0}^t\tilde g(\gamma'(s),\gamma'(s))^{1/2}ds\geq\int_{t_0}^t|\mu(\gamma(s))\langle\xi(\gamma(s)),\gamma'(s)\rangle| ds\\
&\,=\int_{t_0}^t\frac{1}{|\psi(\gamma(s))|}\cdot\frac{1}{c-|\xi(\gamma(s))|^2}\left|\langle\xi(\gamma(s)),\nabla_{\gamma'(s)}\xi\rangle\right| ds\\  
&\,\geq\frac{1}{2\alpha}\left|\int_{t_0}^t\frac{1}{c-|\xi(\gamma(s))|^2}\frac{d}{ds}|\xi(\gamma(s))|^2ds\right|\\  
&\,=\frac{1}{2\alpha}\left|\log(c-|\xi(\gamma(t_0))|^2)-\log(c-|\xi(\gamma(t))|^2)\right|.
 \end{split}
\end{equation}

Let $\epsilon>0$ be given. Since $|\xi|^2$ is proper, $|\xi|^2<c$ and $\sup_M|\xi|^2=c$, there exists a compact subset $L_{\epsilon}$ of $M$ such that $|\xi|^2>c-\epsilon$ in $L_{\epsilon}^c$.
Since $\gamma$ is divergent, there exists $t_{\epsilon}>t_0$ such that $\gamma(t)\in L_{\epsilon}^c$ for $t>t_{\epsilon}$. Hence, for $t>t_0,t_{\epsilon}$, the above computations give
$$\tilde\ell(\gamma_{|[0,t]})\geq\frac{1}{2\alpha}\left(\log(c-|\xi(\gamma(t_0))|^2)-\log\epsilon\right),$$
so that $\tilde\ell(\gamma)=\lim_{t\rightarrow+\infty}\tilde\ell(\gamma_{|[0,t]})=+\infty$.
\end{proof}

\begin{remark}
The above result continues to hold if we assume that (in the notations of the proof) $\psi^{-1}(0)\cap K^c$ is a set of isolated points. It suffices t
o split the trace of $\gamma$ into pieces along which $\psi\neq 0$, estimate the length of each such piece as we did above and add the results.  
\end{remark}

\begin{example}\label{exam:espaco hiperbolico complexo}
In the complex euclidean $n-$space $\mathbb C^n$, let $J$ be the standard quasi-complex structure, $g=\langle\cdot,\cdot\rangle$ be the standard metric and
$\mathbb B^n=\{z\in\mathbb C^n;\,|z|<1\}$. Since the vector field $\xi(p)=p$ is closed and conformal, we can furnish $\mathbb B^n$ with a k\"ahlerian metric $\tilde g$, such that
$$\tilde g=\frac{1}{1-|\xi|^2}\langle\cdot,\cdot\rangle+\frac{1}{(1-|\xi|^2)^2}(\theta_{\xi}^2+\theta_{J\xi}^2).$$

An immediate application of the previous proposition readily gives the completeness of $(\mathbb B^n,\tilde g)$. Therefore, the formula of Proposition 
\ref{prop:holomorphic sectional curvature of g tilde} for the holomorphic sectional curvature of $(\mathbb B^n,J,\tilde g)$, 
together with Hawley-Igusa theorem on the classification of complete, simply connected, k\"ahlerian manifolds of constant holomorphic sectional curvature
shows that $(\mathbb B^n,J,\tilde g)$ is nothing but the complex hyperbolic space $\mathbb C\mathbb H^n$. 

\end{example}

\begin{example}\label{exam:related to the main theorems}
Let $(N^{n-1},J_N,g_N)$ be a k\"ahlerian manifold (of complex dimension $n-1$) and $\mathbb T$ be a flat torus with its standard quasi-complex structure. If $M^n=N\times\mathbb T$ is furnished with the product quasi-complex structure and the product metric, then $M^n$ is a k\"ahlerian manifold. Moreover, if $\mathbb T$ is the quotient of the lattice $\mathcal L$ in $\mathbb R^2$ and $Z$ stands for the canonical vector field along one of the directions of the lattice, then $Z$ induces a parallel (hence, closed conformal) vector field on $\mathbb T$, which can be lifted to a parallel vector field on $M$. 
\end{example}

We now relate the holomorphic sectional curvatures of $(M,J,g)$ and $(M,J,\tilde g)$. To this end, we need first to relate the corresponding Levi-Civita connections, and we do so in the coming result. Along the rest of this section, we write $|\xi|^2$ for $g(\xi,\xi)$.

\begin{proposition}\label{prob:Levi-Civita connection with respect to g tilde}
Let $(M^n,J,g)$ be a k\"ahlerian manifold, $\xi\in\mathfrak X(M)$ be a closed conformal vector field such that $|\xi|^2<c$, and $\tilde g$ be the k\"ahlerian metric on $(M,J)$ given as in
Theorem \ref{thm:deformacao de metricas Kahler por campo conforme fechado}. For $X\in\mathfrak X(M)$, let $\nabla$ and $\tilde\nabla$ respectively denote the Levi-Civita connections of
$g$ and $\tilde g$. Then,
\begin{equation}\label{eq:Levi-Civita connection with respect to g tilde}
 \tilde\nabla_XY=\nabla_XY+\psi\mu\{\langle\xi,X\rangle Y+\langle\xi,Y\rangle X+\langle J\xi,X\rangle JY+\langle J\xi,Y\rangle JX\},
\end{equation}
where $\psi$ is the conformal factor of $\xi$.
\end{proposition}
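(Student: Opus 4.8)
The plan is to obtain \eqref{eq:Levi-Civita connection with respect to g tilde} by verifying that the right-hand side, call it $\overline{\nabla}_XY$, is both symmetric (torsion-free) and metric with respect to $\tilde g$; by uniqueness of the Levi-Civita connection this forces $\overline{\nabla}=\tilde\nabla$. First I would record the ingredients that make the computation go through: since $\xi$ is closed conformal, $\nabla_X\xi=\psi X$ and hence $\nabla_X(J\xi)=J\nabla_X\xi=\psi JX$ using \eqref{eq:parallelism of J in Kahler manifolds}; moreover $\theta_\xi$ and $\theta_{J\xi}$ are the differentials (up to the factor $2\psi$) computed in the proof of Theorem~\ref{thm:deformacao de metricas Kahler por campo conforme fechado}, and $\nabla\mu=d\mu^\sharp=2\psi\mu^2\xi$ from \eqref{eq:auxiliar 3 para deformacao de metricas Kahler por campo conforme fechado}. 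I would also note the elementary identity $\langle J\xi,X\rangle\langle J\xi,Y\rangle+\langle\xi,X\rangle\langle\xi,Y\rangle = \theta_\xi^2(X,Y)+\theta_{J\xi}^2(X,Y)$, so that $\tilde g(X,Y)=\mu\langle X,Y\rangle+\mu^2\bigl(\langle\xi,X\rangle\langle\xi,Y\rangle+\langle J\xi,X\rangle\langle J\xi,Y\rangle\bigr)$.

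Symmetry of $\overline\nabla$ is immediate: the correction term $\psi\mu\{\langle\xi,X\rangle Y+\langle\xi,Y\rangle X+\langle J\xi,X\rangle JY+\langle J\xi,Y\rangle JX\}$ is manifestly symmetric in $X$ and $Y$, and $\nabla$ is torsion-free, so $\overline\nabla_XY-\overline\nabla_YX=[X,Y]$. The substantive step is to check the metric condition
\begin{equation}\nonumber
X\,\tilde g(Y,Z)=\tilde g(\overline\nabla_XY,Z)+\tilde g(Y,\overline\nabla_XZ)
\end{equation}
for all $X,Y,Z\in\mathfrak X(M)$. I would expand the left-hand side using the explicit formula for $\tilde g$ and the product rule, generating $X(\mu)$ and $X(\mu^2)$ terms (handled via \eqref{eq:auxiliar 3 para deformacao de metricas Kahler por campo conforme fechado}) together with terms like $X\langle\xi,Y\rangle=\langle\nabla_X\xi,Y\rangle+\langle\xi,\nabla_XY\rangle=\psi\langle X,Y\rangle+\langle\xi,\nabla_XY\rangle$, and similarly for $J\xi$. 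On the right-hand side I would expand $\tilde g(\overline\nabla_XY,Z)$ by first writing $\overline\nabla_XY=\nabla_XY+(\text{correction})$ and then feeding each piece through $\tilde g$, repeatedly using that $\langle J\xi,\xi\rangle=0$, $\langle JX,JY\rangle=\langle X,Y\rangle$, $\langle JX,Y\rangle=-\langle X,JY\rangle$, and $J^2=-\mathrm{id}$. The $\nabla_XY$ contributions on both sides match because $\nabla$ is the Levi-Civita connection of $g$; what remains is to see that all the $\psi\mu$- and $\psi\mu^2$-order terms cancel.

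The main obstacle is purely bookkeeping: after expansion one is left with a sizeable number of terms of the form $\psi\mu^2\langle\xi,X\rangle\langle\xi,Y\rangle$, $\psi\mu^3\langle\xi,X\rangle(\cdots)$, and their $J\xi$-analogues, and the cancellation relies on carefully pairing, e.g., the $X(\mu^2)=4\psi\mu^3\langle\xi,X\rangle$ contribution against the $\mu^2$-part of $\tilde g$ applied to the correction term, and the $X(\mu)$ contribution against the $\mu$-part. The cleanest way to organize this is to group terms by their homogeneity in $\mu$ (coefficients of $\mu$, of $\mu^2$, of $\mu^3$) and by which of $\langle\xi,\cdot\rangle$, $\langle J\xi,\cdot\rangle$ factors they carry, and check cancellation block by block; the skew-symmetry relations between $\xi$ and $J\xi$ (in particular $\langle J\xi,JX\rangle=\langle\xi,X\rangle$) are exactly what make the cross-terms disappear. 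Since the computation is long but mechanical, I would relegate the full verification to Section~\ref{section:some proofs}, as the statement of the proposition already anticipates.
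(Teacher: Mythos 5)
Your proposal is correct, but it takes a genuinely different route from the paper. You \emph{verify} the stated formula by checking that the candidate connection $\overline\nabla=\nabla+C$, with $C(X,Y)=\psi\mu\{\langle\xi,X\rangle Y+\langle\xi,Y\rangle X+\langle J\xi,X\rangle JY+\langle J\xi,Y\rangle JX\}$, is torsion-free (immediate, since $C$ is symmetric) and metric for $\tilde g$, and then invoke uniqueness of the Levi-Civita connection. The paper instead \emph{derives} the formula: it computes $2\tilde g(\tilde\nabla_XY,Z)$ from Koszul's formula, arrives at an implicit equation $W+\mu\langle W,\xi\rangle\xi+\mu\langle W,J\xi\rangle J\xi=\mu F(X,Y)$ for the difference tensor $W=\tilde\nabla_XY-\nabla_XY$, and solves it by pairing with $\xi$ and $J\xi$ and using $1+\mu|\xi|^2=c\mu$. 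Your approach buys a shorter computation: the inversion of the operator $W\mapsto W+\mu\langle W,\xi\rangle\xi+\mu\langle W,J\xi\rangle J\xi$ is avoided entirely, and $\tilde g$ only ever needs to be evaluated on explicitly known vector fields. I checked that the metricity identity $X\tilde g(Y,Z)=\tilde g(\overline\nabla_XY,Z)+\tilde g(Y,\overline\nabla_XZ)$ does close up as you predict: the $\nabla$-terms match because $\nabla$ is metric for $g$; the order-$\mu^2$ terms match after the cancellation $\langle J\xi,X\rangle(\langle JY,Z\rangle+\langle JZ,Y\rangle)=0$; and the order-$\mu^3$ terms on the right collapse to $4\psi\mu^3\langle\xi,X\rangle(\langle\xi,Y\rangle\langle\xi,Z\rangle+\langle J\xi,Y\rangle\langle J\xi,Z\rangle)$, which is exactly the contribution of $X(\mu^2)=4\psi\mu^3\langle\xi,X\rangle$ on the left. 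The trade-off is that your argument presupposes the answer, whereas the paper's Koszul computation would produce the formula even if one did not know it in advance; since the formula is given in the statement, this is no loss here.
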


\begin{proof}
As in the proof of Theorem \ref{thm:deformacao de metricas Kahler por campo conforme fechado}, let $g=\langle\cdot,\cdot\rangle$ and $|\cdot|$ be the corresponding norm. On the one hand, for $X,Y,Z\in\mathfrak X(M)$ we have
\begin{equation}\label{eq:auxiliar 1 para relacao entre conexoes}
\begin{split}
2\tilde g(\tilde\nabla_XY,Z)
&\,=2\mu\langle\tilde\nabla_XY+\mu\langle\tilde\nabla_XY,\xi\rangle\xi+\mu\langle\tilde\nabla_XY,J\xi\rangle J\xi,Z\rangle.
\end{split}
\end{equation}
On the other, it follows from Koszul's formula that
\begin{equation}\label{eq:auxiliar 2 para relacao entre conexoes}
\begin{split}
2\tilde g(\tilde\nabla_XY,Z)&\,=X(\tilde g(Y,Z))+Y(\tilde g(Z,X))-Z(\tilde g(X,Y))\\
&\,\,\,\,\,\,\,-\tilde g(X,[Y,Z])+\tilde g(Y,[Z,X])+\tilde g(Z,[X,Y]).
\end{split}
\end{equation}

We compute each summand at the right hand side of the last expression above. Taking into account (\ref{eq:auxiliar 3 para deformacao de metricas Kahler por campo conforme fechado}), the closed conformal character of $\xi$ and (\ref{eq:parallelism of J in Kahler manifolds}), we get
\begin{equation}\label{eq:auxiliar 3 para relacao entre conexoes}
\begin{split}
X(\tilde g(Y,Z))&\,=2\psi\mu^2\langle\xi,X\rangle\langle Y,Z\rangle+\mu X\langle Y,Z\rangle\\
&\,\,\,\,\,\,\,+4\psi\mu^3\langle\xi,X\rangle(\langle\xi,Y\rangle\langle\xi,Z\rangle+\langle J\xi,Y\rangle\langle J\xi,Z\rangle)\\
&\,\,\,\,\,\,\,+\mu^2(\psi\langle X,Y\rangle\langle\xi,Z\rangle+\langle\xi,\nabla_XY\rangle\langle\xi,Z\rangle)\\
&\,\,\,\,\,\,\,+\mu^2(\psi\langle\xi,Y\rangle\langle X,Z\rangle+\langle\xi,Y\rangle\langle\xi,\nabla_XZ\rangle)\\
&\,\,\,\,\,\,\,+\mu^2(\psi\langle JX,Y\rangle\langle J\xi,Z\rangle+\langle J\xi,\nabla_XY\rangle\langle J\xi,Z\rangle)\\
&\,\,\,\,\,\,\,+\mu^2(\psi\langle J\xi,Y\rangle\langle JX,Z\rangle+\langle J\xi,Y\rangle\langle J\xi,\nabla_XZ\rangle).
\end{split}
\end{equation}
Also,
\begin{equation}\label{eq:auxiliar 4 para relacao entre conexoes}
\begin{split}
\tilde g([X,Y],Z)
&\,=\mu\langle[X,Y],Z\rangle+\mu^2(\langle\xi,[X,Y]\rangle\langle\xi,Z\rangle+\langle J\xi,[X,Y]\rangle\langle J\xi,Z\rangle).
\end{split}
\end{equation}

Similar computations to those above yield corresponding formulae to the remaining summands at the right hand of (\ref{eq:auxiliar 2 para relacao entre conexoes}). Substituting all of these in (\ref{eq:auxiliar 2 para relacao entre conexoes}) and performing some cancellations, we obtain 
\begin{equation}\label{eq:auxiliar 6 para relacao entre conexoes}
\begin{split}
2\tilde g(\tilde\nabla_XY,Z)&\,=2\mu\langle\nabla_XY+\psi\mu(\langle\xi,X\rangle Y+\langle\xi,Y\rangle X)\\
&\,\,\,\,\,\,\,+2\psi\mu^2(\langle\xi,X\rangle\langle\xi,Y\rangle\xi+\langle\xi,X\rangle\langle J\xi,Y\rangle J\xi)\\
&\,\,\,\,\,\,\,+2\psi\mu^2(\langle J\xi,X\rangle\langle\xi,Y\rangle J\xi-\langle J\xi,X\rangle\langle J\xi,Y\rangle\xi)\\
&\,\,\,\,\,\,\,+\mu(\langle\nabla_XY,\xi\rangle\xi+\langle\nabla_XY,J\xi\rangle J\xi)\\
&\,\,\,\,\,\,\,+\mu(\psi\langle J\xi,Y\rangle JX+\psi\langle J\xi,X\rangle JY),Z\rangle.
\end{split}
\end{equation}
Comparing (\ref{eq:auxiliar 1 para relacao entre conexoes}) with (\ref{eq:auxiliar 6 para relacao entre conexoes}), we get
\begin{equation}\label{eq:auxiliar 7 para relacao entre conexoes}
\begin{split}
&\tilde{\nabla}_XY-\nabla_XY+\mu\langle\tilde{\nabla}_XY-\nabla_XY,\xi\rangle\xi+\mu\langle\tilde{\nabla}_XY-\nabla_XY,J\xi\rangle J\xi=\\
&\,\,\,\,=\psi\mu(\langle\xi,X\rangle Y+\langle\xi,Y\rangle X+\langle J\xi,Y\rangle JX+\langle J\xi,X\rangle JY)\\
&\,\,\,\,\,\,\ \ +2\psi\mu^2(\langle\xi,X\rangle\langle\xi,Y\rangle-\langle J\xi,X\rangle\langle J\xi,Y\rangle)\xi\\
&\,\,\,\,\,\,\ \ +2\psi\mu^2(\langle\xi,X\rangle\langle J\xi,Y\rangle+\langle J\xi,X\rangle\langle\xi,Y\rangle)J\xi.
\end{split}
\end{equation}

If we let $W=\tilde{\nabla}_XY-\nabla_XY$ and $\mu F(X,Y)$ be the summand on the right side of the equality in (\ref{eq:auxiliar 7 para relacao entre conexoes}), we obtain
\begin{equation}\label{eq:expressao para W}
W+\mu\langle W,\xi\rangle\xi+\mu\langle W,J\xi\rangle J\xi=\mu F(X,Y).
\end{equation}
Taking the inner product  of (\ref{eq:expressao para W}) with $\xi$ and $J\xi$, respectively, and recalling that $1+\mu\langle\xi,\xi\rangle=c\mu$, we find
$$\langle W,\xi\rangle=c^{-1}\langle F(X,Y),\xi\rangle\ {\rm and}\ \langle W,J\xi\rangle=c^{-1}\langle F(X,Y),J\xi\rangle.$$
Hence, (\ref{eq:expressao para W}) provides
\begin{equation}\label{eq:expressão 2 para W}
W=\mu(F(X,Y)-c^{-1}\langle F(X,Y),\xi\rangle\xi-c^{-1}\langle F(X,Y),J\xi\rangle J\xi).
\end{equation}
Finally, since $\langle J\xi,\xi\rangle=0$, we get
\begin{equation}\nonumber
\begin{split}
\langle F(X,Y),\xi\rangle&\,=2\psi(\langle\xi,X\rangle\langle\xi,Y\rangle(1+\mu\langle\xi,\xi\rangle)-\langle J\xi,X\rangle\langle J\xi,Y\rangle(1+\mu\langle\xi,\xi\rangle))\\
&\,\,\,\,=2\psi\mu c(\langle\xi,X\rangle\langle\xi,Y\rangle-\langle J\xi,X\rangle\langle J\xi,Y\rangle)
\end{split}
\end{equation}
and, similarly, 
$$\langle F(X,Y), J\xi\rangle J=2\psi\mu c(\langle\xi,X\rangle\langle J\xi,Y\rangle+\langle\xi,Y\rangle\langle J\xi,X\rangle).$$
Therefore, (\ref{eq:auxiliar 7 para relacao entre conexoes}) and (\ref{eq:expressão 2 para W}) yield
$$\tilde{\nabla}_XY-\nabla_XY=W=\psi\mu\{\langle\xi,X\rangle Y+\langle\xi,Y\rangle X+\langle J\xi,Y\rangle JX+\langle J\xi,X\rangle JY\}.$$
\end{proof}

Before we can proceed to relate the holomorphic sectional curvatures of $g=\langle\cdot,\cdot\rangle$ and $\tilde g$, we need a few more preliminaries. 

Firstly, the closed conformal condition on $\xi\in\mathfrak X(M^n)$ (recall that $n$ stands for the {\em complex} dimension of $M$) readily gives
$$\psi=\frac{1}{2n}\text{div}\,\xi,$$
and Lemma $1$ of \cite{Urbano:98} shows that
$$|\xi|^2\nabla(\text{div}\,\xi)=-2n\text{Ric}(\xi)\xi,$$
where $\nabla(\text{div}\,\xi)$ stands for the gradient of the divergence of $\xi$ with respect to $g$ and $\text{Ric}(\xi)$ for the normalized Ricci curvature of $(M,g)$ in the direction of $\xi$. In particular, at each point where $\xi\neq 0$, we get
\begin{equation}\label{eq:gradiente de psi onde xi nao se anula}
\nabla\psi=-\text{Ric}(\hat\xi)\xi,
\end{equation}
where $\hat\xi=\frac{\xi}{|\xi|}$. 

Secondly, it follows from \eqref{eq:auxiliar 4 para deformacao de metricas Kahler por campo conforme fechado} that
\begin{equation}\label{eq:derivadas de mu}
X(\mu)=2\psi\mu^2\langle X,\xi\rangle\,\,\text{and}\,\,JX(\mu)=-2\psi\mu^2\langle X,J\xi\rangle.
\end{equation}
Therefore, at each point where $\xi$ does not vanish, \eqref{eq:gradiente de psi onde xi nao se anula} furnishes
\begin{equation}\label{eq:derivada 1 de psi}
X(\psi)=\langle X,\nabla\psi\rangle=-\text{Ric}(\hat\xi)\langle X,\xi\rangle
\end{equation}
and, analogously,
\begin{equation}\label{eq:derivada 2 de psi} 
JX(\psi)=-\text{Ric}(\hat\xi)\langle JX,\xi\rangle=\text{Ric}(\hat\xi)\langle X,J\xi\rangle.
\end{equation}

Thirdly, if $\langle X,X\rangle=1$, then, since $1+\mu\langle\xi,\xi\rangle=c\mu$, we get
\begin{equation}\nonumber
\begin{split}
\tilde{g}(X,\xi)=\mu\langle X,\xi\rangle+\mu^2(\langle X,\xi\rangle\langle\xi,\xi\rangle+\langle X,J\xi\rangle\langle\xi,J\xi\rangle)=c\mu^2\langle X,\xi\rangle,
\end{split}
\end{equation}
$$\tilde{g}(X,J\xi)=-\tilde{g}(JX,\xi)=-c\mu^2\langle JX,\xi\rangle=c\mu^2\langle X,J\xi\rangle$$
and
\begin{equation}\nonumber
\begin{split}
\tilde{g}(X,X)
&\,=\mu+\mu^2(\langle X,\xi\rangle^2+\langle X,J\xi\rangle^2).
\end{split}
\end{equation}

We shall also need to use {\em hermitian geodesic frames}, in the sense of the following lemma.

\begin{lemma}\label{lemma:referencial geodesico em variedade Kahler}
If $(M^n,J,g)$ is a k\"ahlerian manifold and $p\in M$, then there exist a neighborhood $U\subset M$ of $p$ and a hermitian frame field in $U$ which is geodesic at $p$.
\end{lemma}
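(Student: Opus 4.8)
The plan is to prove Lemma~\ref{lemma:referencial geodesico em variedade Kahler} by combining the classical construction of geodesic normal frames with the parallelism of $J$ on a K\"ahler manifold. First I would fix $p \in M$ and choose an arbitrary orthonormal basis $(f_1, Jf_1, \dots, f_n, Jf_n)$ of $T_pM$, noting that such a Hermitian basis at a single point exists because $g$ is Hermitian with respect to $J$ and $J^2 = -\mathrm{id}$. Then I would parallel-transport each vector $f_1, \dots, f_n$ (radially along geodesics emanating from $p$, via the exponential map, or equivalently extend by the standard construction of a frame that is geodesic at $p$) to obtain local vector fields $e_1, \dots, e_n$ defined on a normal neighborhood $U$ of $p$, together with $e_{n+j} := Je_j$.

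The key point is then to check that the resulting $2n$-frame $(e_1, Je_1, \dots, e_n, Je_n)$ is (i) a frame field on $U$, (ii) Hermitian with respect to $g$ on all of $U$, and (iii) geodesic at $p$. For (i), the $e_j$ are obtained by radial parallel transport of a basis, hence remain linearly independent on a sufficiently small $U$ (indeed they stay orthonormal, since parallel transport is an isometry for the Levi-Civita connection), and adjoining the $Je_j$ keeps a frame because $J$ is a pointwise linear isomorphism. For (ii), Hermitianness $\langle e_j, e_k\rangle = \langle Je_j, Je_k\rangle$ and $\langle e_j, Je_k\rangle = -\langle Je_j, e_k\rangle$ holds at $p$ by choice of the initial basis and propagates to $U$ because both $g$ and $J$ are parallel (equation~(\ref{eq:parallelism of J in Kahler manifolds})), so the relevant inner products are constant along the radial geodesics. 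For (iii), geodesic at $p$ means $(\nabla_X e_j)(p) = 0$ for all $X \in T_pM$ and all $j$; for $1 \le j \le n$ this is exactly the defining property of a frame obtained by the standard normal-coordinate construction (or radial parallel transport followed by the usual symmetrization argument), and for the $Je_j$ we use $\nabla J = 0$ again to write $(\nabla_X (Je_j))(p) = (J\nabla_X e_j)(p) = J(\nabla_X e_j)(p) = 0$.

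The main obstacle, such as it is, is purely a matter of being careful about the word ``geodesic at $p$'': radial parallel transport of an orthonormal basis gives a frame that is orthonormal everywhere but whose covariant derivatives vanish at $p$ only in radial directions unless one uses the genuine normal-frame construction; the cleanest route is to invoke the standard fact that in Riemannian normal coordinates centered at $p$ the coordinate vector fields $\partial_i$ satisfy $(\nabla_X \partial_i)(p) = 0$ for all $X$, orthonormalize them by the Gram--Schmidt process (which does not spoil this property at $p$, since the Christoffel symbols and the first derivatives of $g$ vanish at $p$), and then restrict to the sub-collection realizing a Hermitian basis at $p$ before extending by $J$. Once the extension by $J$ is in place, equation~(\ref{eq:parallelism of J in Kahler manifolds}) does all the remaining work, so the only genuine content is the classical existence of a $g$-geodesic orthonormal frame at $p$ together with the observation that this construction can be arranged to be $J$-adapted at the single point $p$.
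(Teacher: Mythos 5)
Your proof is correct and follows essentially the same route as the paper: radially parallel-transport a Hermitian basis of $T_pM$ over a normal neighborhood and use $\nabla J=0$ to see that the frame stays $J$-adapted and that the $Je_j$ inherit the geodesic property at $p$. The worry you raise about covariant derivatives vanishing ``only in radial directions'' is moot, since at the center $p$ every tangent vector is the initial velocity of a radial geodesic, so the radially parallel frame is already geodesic at $p$ in all directions (this is precisely the exercise of do Carmo that the paper cites), and no detour through normal coordinates is needed.
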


\begin{proof}
Take a normal ball $U\subset M$ centered at $p$ and a hermitian basis $(e_1,J_pe_1,\ldots,$ $e_n,J_pe_n)$ for $T_pM$. By parallel translating such vectors along the 
geodesic rays on $U$ departing from $p$, we get 
an orthonormal frame field $(e_1,e_1',\ldots,e_n,e_n')$ in $U$, which is geodesic at $p$. We assert that $e_k'=Je_k$ in $U$, for $1\leq k\leq n$. In fact, given $q\in U$, take the radial geodesic $\gamma:[0,1]\rightarrow U$, such that $\gamma(0)=p$ and $\gamma(1)=q$;
it follows from (\ref{eq:parallelism of J in Kahler manifolds}) that
$$\frac{D}{dt}Je_k=\nabla_{\gamma'}Je_k=J\nabla_{\gamma'}e_k=0,$$
so that $Je_k$ is parallel along $\gamma$. However, since $e_k'(p)=J_pe_k=(Je_k)(p)$, uniqueness of parallel transport gives $e_k'=Je_k$ along $\gamma$, so that $e_k'=Je_k$ at $q$. 
\end{proof}

We are finally in position to state and prove the following

\begin{proposition}\label{prop:holomorphic sectional curvature of g tilde}
Let $(M^n,J,g)$ be a k\"ahlerian manifold, $\xi\in\mathfrak X(M)$ be a closed conformal vector field with isolated zeros and such that $|\xi|^2<c$ on $M$, $\psi$ be the conformal factor of $\xi$ and $\tilde g$ be the k\"ahlerian metric on $(M,J)$ given as in Theorem \ref{thm:deformacao de metricas Kahler por campo conforme fechado}. For $X\in T_pM$ unitary with respect to $g$, let $K(X)$ and $\tilde K(X)$ denote the holomorphic sectional curvatures of $(M,J,g)$ and $(M,J,\tilde g)$ with respect to $X$, respectively. Then, at each point where $\xi$ does not vanish, we have
\begin{equation}\nonumber
\begin{split}
\tilde K(X)&\,=\frac{1}{\tilde{g}(X,X)^2}\{\mu K(X)+\mu^2{\rm Ric}(\hat\xi)(\langle X,\xi\rangle^2+\langle X,J\xi\rangle^2)\}\\
&\,\,\,\,\,\,\,+\frac{1}{\tilde{g}(X,X)}\cdot 2\mu{\rm Ric}(\hat\xi)(\langle X,\xi\rangle^2+\langle X,J\xi\rangle^2)-4\psi^2,
\end{split}
\end{equation}
where ${\rm Ric}(\hat\xi)$ stands for the normalized Ricci curvature of $g$ in the direction of $\hat\xi$
\end{proposition}

\begin{proof}
Extend $X$ to a smooth vector field around $p$. 
If $\tilde{R}$ denotes the curvature tensor of $(M,\tilde{g})$, the holomorphic sectional curvature of $(M,\tilde{g})$ with respect to $X$ is given by
\begin{equation}\nonumber
\begin{split}
\tilde K(X)&\,=\frac{\tilde{g}(\tilde{R}(X,JX)JX,X)}{\tilde{g}(X,X)\tilde{g}(JX,JX)-\tilde{g}(X,JX)^2}\\
&\,=\frac{1}{\tilde{g}(X,X)^2}\tilde{g}(\tilde{\nabla}_X\tilde{\nabla}_{JX}JX-\tilde{\nabla}_{JX}\tilde{\nabla}_X JX-\tilde{\nabla}_{[X,JX]}JX,X)\\
&\,=\frac{1}{\tilde{g}(X,X)^2}\{X(\tilde{g}(\tilde{\nabla}_{JX}JX,X))-\tilde{g}(\tilde{\nabla}_{JX}JX,\tilde{\nabla}_{X}X)-JX(\tilde{g}(\tilde{\nabla}_{X}JX,X))\\
&\hspace{2cm}+\tilde{g}(\tilde{\nabla}_{X}JX,\tilde{\nabla}_{JX}X)-\tilde{g}(\tilde{\nabla}_{[X,JX]}JX,X)\}.
\end{split}
\end{equation}

In order to simplify the notation, frow now on we let
\begin{equation}\label{eq:alpha and beta}
\alpha=2\psi\mu\langle X,\xi\rangle\ \ \text{and}\ \ \beta=2\psi\mu\langle X,J\xi\rangle.
\end{equation}
It easily follows from (\ref{eq:Levi-Civita connection with respect to g tilde}) that
\begin{equation}\label{eq:auxiliar para mudanca da curvatura seccional holomorfa}
\begin{split}
\tilde{\nabla}_XX&\,=\nabla_XX+2\psi\mu(\langle X,\xi\rangle X+\langle X,J\xi\rangle JX)\\
&\,=\nabla_X X+\alpha X+\beta JX,\\
\tilde{\nabla}_XJX&\,=J\tilde{\nabla}_XX=\nabla_XJX+\alpha JX-\beta X,\\
\tilde{\nabla}_{JX}X&\,=\nabla_{JX}X+2\psi\mu(-\langle X,J\xi\rangle X+\langle X,\xi\rangle JX)\\
&\,=\nabla_{JX}X-\beta X+\alpha JX,\\
\tilde{\nabla}_{JX}JX&\,=J\tilde{\nabla}_{JX}X=\nabla_{JX}JX-\alpha X-\beta JX.
\end{split}
\end{equation}

For the subsequent computations we observe that, since the value of $\tilde K(X)$ at $p$ depends only on the value 
of $X$ at $p$, we can assume $(\nabla_vX)(p)=0$ and $(\nabla_vJX)(p)=0$ for all $v\in T_pM$ (just apply Lemma 
\ref{lemma:referencial geodesico em variedade Kahler} to obtain a hermitian geodesic frame $(e_1,Je_1,\ldots,e_n,Je_n)$ around $p$, such that $e_1(p)=X_p$). Hence, $[X,JX]=0$ at $p$, so that the last term in the expression for $\tilde{K}(X)$ vanishes at $p$.

With the aid of the formulas in \eqref{eq:auxiliar para mudanca da curvatura seccional holomorfa}, we then get at the point $p$
\begin{equation}\label{eq:curvatura seccional holomorfa}
\begin{split}
\tilde{K}(X)&\,=\frac{1}{\tilde{g}(X,X)^2}\{X(\tilde{g}(\nabla_{JX}JX-\alpha X-\beta JX,X))\\
&\hspace{2cm}-\tilde{g}(\nabla_{JX}JX-\alpha X-\beta JX,\nabla_{X}X+\alpha X+\beta JX)\\
&\hspace{2cm}-JX(\tilde{g}(\nabla_{X}JX-\beta X+\alpha JX,X))\\
&\hspace{2cm}+\tilde{g}(\nabla_{X}JX-\beta X+\alpha JX,\nabla_{JX}X-\beta X+\alpha JX\}\\
&\,=\frac{1}{\tilde{g}(X,X)^2}\{\underbrace{X(\tilde{g}(\nabla_{JX}JX,X))-JX(\tilde{g}(\nabla_{X}JX,X))}_I\}\\
&\,\,\,\,\,\,\,+\frac{1}{\tilde{g}(X,X)}\{\underbrace{-X(\alpha)+2(\alpha^2+\beta^2)+JX(\beta)}_{II}\}\\
&\,\,\,\,\,\,\,+\frac{1}{\tilde{g}(X,X)^2}\{\underbrace{-\alpha X(\tilde{g}(X,X))+\beta JX(\tilde{g}(X,X))}_{III}\}.
\end{split}
\end{equation}

Let us separately compute each of $I$, $II$ and $III$:
\begin{equation}\nonumber
\begin{split}
I&\,=X(\mu)\langle\nabla_{JX}JX,X\rangle+\mu X\langle\nabla_{JX}JX,X\rangle\\
&\,\,\,\,\,\,+X(\mu^2)(\langle\nabla_{JX}JX,\xi\rangle\langle X,\xi\rangle+\langle\nabla_{JX}JX,J\xi\rangle\langle X,J\xi\rangle)\\
&\,\,\,\,\,\,+\mu^2X(\langle\nabla_{JX}JX,\xi\rangle\langle X,\xi\rangle+\langle\nabla_{JX}JX,J\xi\rangle\langle X,J\xi\rangle)\\
&\,\,\,\,\,\,-JX(\mu)\langle\nabla_{X}JX,X\rangle-\mu JX\langle\nabla_{X}JX,X\rangle\\
&\,\,\,\,\,\,-JX(\mu^2)(\langle\nabla_{X}JX,\xi\rangle\langle X,\xi\rangle+\langle\nabla_{X}JX,J\xi\rangle\langle X,J\xi\rangle)\\
&\,\,\,\,\,\,-\mu^2JX(\langle\nabla_{X}JX,\xi\rangle\langle X,\xi\rangle+\langle\nabla_{X}JX,J\xi\rangle\langle X,J\xi\rangle).
\end{split}
\end{equation}

Recalling that $\nabla_vX=\nabla_vJX=0$ and $[X,JX]=0$ at $p$, we get at this point
\begin{equation}\label{eq:expressao para o termo I}
\begin{split}
I&\,=\mu\langle\nabla_X\nabla_{JX}JX,X\rangle+\mu^2\langle\nabla_X\nabla_{JX}JX,\xi\rangle\langle X,\xi\rangle\\
&\,\,\,\,\,\,+\mu^2\langle\nabla_X\nabla_{JX}JX,J\xi\rangle\langle X,J\xi\rangle
-\mu\langle\nabla_{JX}\nabla_{X}JX,X\rangle\\
&\,\,\,\,\,\,-\mu^2\langle\nabla_{JX}\nabla_{X}JX,\xi\rangle\langle X,\xi\rangle-\mu^2\langle\nabla_{JX}\nabla_{X}JX,J\xi\rangle\langle X,J\xi\rangle\\
&\,=\mu\langle R(X,JX)JX,X\rangle+\mu^2\langle R(X,JX)JX,\xi\rangle\langle X,\xi\rangle\\
&\,\,\,\,\,\,+\mu^2\langle R(X,JX)JX,J\xi\rangle\langle X,J\xi\rangle\\
&\,=\mu K(X)+\mu^2\langle R(X,JX)JX,\xi\rangle\langle X,\xi\rangle+\mu^2\langle R(X,JX)JX,J\xi\rangle\langle X,J\xi\rangle.
\end{split}
\end{equation}

We now turn our attention to the computation of $II$. By \eqref{eq:alpha and beta}, \eqref{eq:derivadas de mu} and the closed conformal character of $\xi$, we obtain
\begin{equation}\nonumber
\begin{split}
II&\,=-2X(\psi\mu\langle X,\xi\rangle)+2[(2\psi\mu\langle X,\xi\rangle)^2+(2\psi\mu\langle X,J\xi\rangle)^2]+2JX(\psi\mu\langle X,J\xi\rangle)\\
&\,=-2X(\psi)\mu\langle X,\xi\rangle+2JX(\psi)\mu\langle X,J\xi\rangle+4\psi^2\mu^2(\langle X,\xi\rangle^2+\langle X,J\xi\rangle^2)-4\psi^2\mu.
\end{split}
\end{equation}

If $p\in M$ is such that $\xi(p)\neq 0$, it follows from (\ref{eq:derivada 1 de psi}) and (\ref{eq:derivada 2 de psi}) that
\begin{equation}\label{eq:expressao para o termo II}
\begin{split}
II&\,=2\mu{\rm Ric}(\hat\xi)\langle X,\xi\rangle^2+2\mu{\rm Ric}(\hat\xi)\langle X,J\xi\rangle^2\\
&\,\,\,\,\,\,\,\,+4\psi^2\left(\mu+\mu^2(\langle X,\xi\rangle^2+\langle X,J\xi\rangle^2)\right)-8\psi^2\mu\\
&\,=2\mu{\rm Ric}(\hat\xi)(\langle X,\xi\rangle^2+\langle X,J\xi\rangle^2)+4\psi^2\tilde{g}(X,X)-8\psi^2\mu.
\end{split}
\end{equation}

In order to deal with $III$, we recall once more that $(\nabla_vX)(p)=0$ for all $v\in T_pM$. 
Then, at the point $p$ and for $Y\in\mathfrak X(M)$, we get from \eqref{eq:derivadas de mu} and the closed conformal character of $\xi$ that
\begin{equation}\nonumber
\begin{split}
Y(\tilde{g}(X,X))&\,=Y(\mu)+Y(\mu^2)(\langle X,\xi\rangle^2+\langle X,J\xi\rangle^2)+\mu^2(Y(\langle X,\xi\rangle^2)+Y(\langle X,J\xi\rangle^2))\\
&\,=2\psi\mu^2\langle Y,\xi\rangle+4\psi\mu^3\langle Y,\xi\rangle(\langle X,\xi\rangle^2+\langle X,J\xi\rangle^2)\\
&\,\,\,\,\,\,\,+2\mu^2(\langle X,\xi\rangle\langle X,\psi Y\rangle+\langle X,J\xi\rangle\langle X,J(\psi Y)\rangle)\\
&\,=4\psi\mu\langle Y,\xi\rangle\tilde{g}(X,X)+2\psi\mu^2(\langle X,\xi\rangle\langle X,Y\rangle+\langle X,J\xi\rangle\langle X,JY\rangle)\\
&\,\,\,\,\,\,\,-2\psi\mu^2\langle Y,\xi\rangle.
\end{split}
\end{equation}
In particular,
\begin{equation}\label{eq:expressao para o termo III}
\begin{split}
III&\,=-2\psi\mu\langle X,\xi\rangle\{4\psi\mu\langle X,\xi\rangle\tilde g(X,X)+2\psi\mu^2\langle X,\xi\rangle-2\psi\mu^2\langle X,\xi\rangle\}\\
&\,\,\,\,\,\,\,+2\psi\mu\langle X,J\xi\rangle\{4\psi\mu\langle JX,\xi\rangle\tilde{g}(X,X)-2\psi\mu^2\langle X,J\xi\rangle-2\psi\mu^2\langle JX,\xi\rangle\}\\
&\,=-8\psi^2\mu^2(\langle X,\xi\rangle^2+\langle X,J\xi\rangle^2)\tilde{g}(X,X).
\end{split}
\end{equation}

Substituting (\ref{eq:expressao para o termo I}), (\ref{eq:expressao para o termo II}) and (\ref{eq:expressao para o termo III}) in (\ref{eq:curvatura seccional holomorfa}), we obtain
\begin{equation}\nonumber
\begin{split}
\tilde{K}(X)&\,=\frac{1}{\tilde{g}(X,X)^2}\{\mu K(X)+\mu^2\langle R(X,JX)JX,\xi\rangle\langle X,\xi\rangle\\
&\hspace{2cm}+\mu^2\langle R(X,JX)JX,J\xi\rangle\langle X,J\xi\rangle\}\\
&\,\,\,\,\,\,\,+\frac{1}{\tilde g(X,X)}\cdot 2\mu{\rm Ric}(\hat\xi)(\langle X,\xi\rangle^2+\langle X,J\xi\rangle^2)-4\psi^2.
\end{split}
\end{equation}

Now, by invoking Lemma $1$ of \cite{Urbano:98} once more, we have
\begin{equation}\nonumber
\begin{split}
\langle R(X,JX)JX,\xi\rangle&\,=-{\rm Ric}(\hat\xi)(\langle JX,\xi\rangle\langle X,JX\rangle-\langle X,\xi\rangle\langle JX,JX\rangle)\\
&\,={\rm Ric}(\hat\xi)\langle X,\xi\rangle,
\end{split}
\end{equation}
and similarly $\langle R(X,JX)JX,J\xi\rangle={\rm Ric}(\hat\xi)\langle X,J\xi\rangle$. Substituting these formulas in the last expression above for $\tilde K(X)$, we finally arrive at the formula displayed in the statement
of the theorem.
\end{proof}

\begin{remark}\label{rem:zeros isolados}
If a k\"ahlerian manifold $M^n$ has complex dimension $n>1$, and possesses a closed conformal vector field $\xi\in\mathfrak X(M)\setminus\{0\}$, then Lemma $1$ of \cite{Urbano:98} assures that the zeros of $\xi$ are all isolated. Thus, the formula for $\tilde K(X)$ holds on all of $M$, provided we substitute $\text{\rm Ric}(\hat\xi)$ by at the singular points of $\xi$. Indeed, in the notations of \eqref{eq:expressao para o termo II}, if $\xi(p)=0$, take a sequence $(p_j)_{j\geq 1}$ in $M$, converging to $p$ and such that $\xi(p_j)\neq 0$. Since $(\nabla\psi)(p_j)=-{\rm Ric}(\hat\xi(p_j))\xi(p_j)$, we get
$$\nabla\psi(p)=\lim_{j}\nabla\psi(p_j)=-\lim_j\underbrace{{\rm Ric}(\hat\xi(p_j))\frac{\xi(p_j)}{|\xi(p_j)|}}_{\text{bounded}}.|\xi(p_j)|\stackrel{j}{\longrightarrow}0.$$
Therefore, if we take ${\rm Ric}(\hat\xi)=0$ at $p$, then (\ref{eq:expressao para o termo II}) will be valid in all cases.
\end{remark}

The corollary below extends, to a general deformation $g\mapsto\tilde g$ as above, the phenomenon of holomorphic sectional curvature decay that takes place when we pass from $\mathbb C^n$ to $\mathbb C\mathbb H^n$.

\begin{corollary}\label{coro:sectional holomorphic curvature decay}
Let $(M,J,g)$ be a k\"ahlerian manifold, $\xi\in\mathfrak X(M)$ be a closed conformal vector field with isolated zeros and such that $|\xi|^2<c$ on $M$, and $\tilde g$ be the k\"ahlerian metric on $(M,J)$ given as in Theorem \ref{thm:deformacao de metricas Kahler por campo conforme fechado}. 
For $X\in T_pM$ unitary with respect to $g$ and such that $K(X)\geq 0$, we have:
\begin{enumerate}
 \item[(a)] If $X\bot\xi,J\xi$, then $\tilde K(X)\leq cK(X)-4\psi^2$.
 \item[(b)] For a general $X$, and if $M$ has nonnegative Ricci curvature, then 
 $$\tilde K(X)\leq cK(X)+2c\,\text{\rm Ric}(\hat\xi)-4\psi^2,$$
 with $\text{\rm Ric}(\hat\xi)$ being taken as $0$ if $\xi(p)=0$.
\end{enumerate}
\end{corollary}

\begin{proof}
If $X\bot\xi,J\xi$, then $\tilde g(X,X)=\mu$. Therefore, our previous result gives
$$\tilde K(X)=(c-|\xi|^2)K(X)-4\psi^2\leq cK(X)-4\psi^2.$$

For a general $X\in T_pM$ unitary, let $A=\langle X,\xi\rangle^2+\langle X,J\xi\rangle^2$ and write
\begin{equation}\nonumber
\begin{split}
\tilde{K}(X)&\,=\frac{\mu}{\tilde{g}(X,X)^2}\cdot K(X)+\frac{(\mu^2+2\mu\tilde g(X,X))A}{\tilde g(X,X)^2}\cdot{\rm Ric}(\hat\xi)-4\psi^2,
\end{split}
\end{equation}
For the first summand, note that
$$1+\mu A=\frac{c-|\xi|^2+\langle X,\xi\rangle^2+\langle JX,\xi\rangle^2}{c-|\xi|^2}\geq\frac{c-|\xi|^2}{c-|\xi|^2}=1;$$
hence, 
$$\frac{\mu}{\tilde g(X,X)^2}=\frac{1}{\mu(1+\mu A)^2}\leq\frac{1}{\mu}\leq c.$$
For the second summand, substituting $\tilde g(X,X)=\mu+\mu^2A$ we get
$$\frac{(\mu^2+2\mu\tilde g(X,X))A}{\tilde g(X,X)^2}=\frac{(3+2\mu A)A}{(1+\mu A)^2}=\frac{1}{\mu}\cdot\frac{3y+2y^2}{1+2y+y^2},$$
where $y=\mu A$. It now suffices to observe that $\frac{1}{\mu}\leq c$ and (since $y\geq 0$)
$$\frac{3y+2y^2}{1+2y+y^2}=2-\frac{1}{y+1}-\frac{1}{(y+1)^2}<2.$$
\end{proof}

\section{Two rigidity results for k\"ahlerian manifolds}

We now use the metric deformation discussed in the previous section to study the structure of connected complete k\"ahlerian manifolds furnished with a nontrivial closed conformal vector field. We start by looking at the compact case, for which we shall need the following preliminary result.

\begin{lemma}\label{lemma:volume element}
Let $(M^1,g,J)$ be a compact k\"ahlerian surface furnished with a closed conformal vector field $\xi\in\mathfrak X(M)$, with isolated set of zeros and conformal factor $\psi$. Let $c>0$ be such that $c>\max_M|\xi|^2$ and $\tilde g$ be the metric on $M$ defined as in \eqref{eq:deformacao de metricas Kahler por campo conforme fechado}. If $d_gM$ and $d_{\tilde g}M$ stand for the volume elements of $M$ with respect to $g$ and $\tilde g$, respectively, then
\begin{equation}\label{eq:volume element}
d_{\tilde g}M=c\mu^2d_gM.
\end{equation}
\end{lemma}

\begin{proof}
Since the singular points of $\xi$ are isolated, it suffices to deduce \eqref{eq:volume element} at the points where $\xi\neq 0$. At such a point $p$, if $e_1=\frac{\xi}{\tilde g(\xi,\xi)^{1/2}}=\frac{\xi}{\sqrt c\mu|\xi|}$ and $e_2=\frac{J\xi}{\tilde g(\xi,\xi)^{1/2}}=\frac{J\xi}{\sqrt c\mu|\xi|}$, then $\{e_1,e_2\}$ is a positive orthonormal basis of $T_pM$ with respect to $\tilde g$. Hence, if $\tilde\theta_1$, $\tilde\theta_2$ is the metrically dual basis, then $d_{\tilde g}M=\tilde\theta_1\wedge\tilde\theta_2$ at $p$. 

For a vector $v\in T_pM$, we have
\begin{equation}\nonumber
 \begin{split}
\tilde\theta_1(v)&\,=\frac{1}{\sqrt c\mu|\xi|}\tilde g(v,\xi)=\frac{1}{\sqrt c\mu|\xi|}\big(\mu\langle v,\xi\rangle+\mu^2\theta_{\xi}^2(v,\xi)\big)\\
&\,=\frac{1}{\sqrt c\mu|\xi|}\big(\mu\theta_{\xi}(v)+\mu^2\theta_{\xi}(v)|\xi|^2\big)\\
&\,=\frac{1}{\sqrt c\mu|\xi|}\big(\mu+\mu^2|\xi|^2\big)\theta_{\xi}(v)\\
&\,=\frac{\sqrt c\mu}{|\xi|}\theta_{\xi}(v)=\sqrt c\mu\theta_{\hat\xi}(v),
 \end{split}
\end{equation}
so that $\tilde\theta_1=\sqrt c\mu\theta_{\hat\xi}$. Likewise, $\tilde\theta_2=\sqrt c\mu\theta_{J\hat\xi}$.

However, since $\{\hat\xi,J\hat\xi\}$ is a positive orthonormal basis for $T_pM$ with respect to $g$, we have 
$$d_{\tilde g}M=\tilde\theta_1\wedge\tilde\theta_2=c\mu^2\theta_{\hat\xi}\wedge\theta_{J\hat\xi}=c\mu^2d_gM.$$
\end{proof}

\begin{theorem}\label{thm:the compact case}
Let $(M^1,g,J)$ be a connected, compact k\"ahlerian surface with gaussian curvature $K\leq 0$. If $M$ possesses a closed conformal vector field $\xi\in\mathfrak X(M)$ whose zeros are all isolated, then $K\equiv 0$, $\xi$ is parallel and $M$ is isometric to a flat torus.
\end{theorem}

\begin{proof}
As before, let $\psi$ be the conformal factor of $\xi$, choose a real number $c>0$ such that $c>\max_M|\xi|^2$ and let $\tilde g$ be defined as in \eqref{eq:deformacao de metricas Kahler por campo conforme fechado}.

Since $\xi^{-1}(0)$ is a set of isolated points and $M$ is compact, we conclude that $\xi^{-1}(0)$ is finite. Therefore, if $\tilde K$ stands for the gaussian curvature of $(M,\tilde g)$, then, at every point of $M\setminus\xi^{-1}(0)$ and in the notations of Proposition \ref{prop:holomorphic sectional curvature of g tilde}, we have 
$$K=K(\hat\xi)\ \ \text{and}\ \ \tilde K=\tilde K(\hat\xi).$$
That result also furnishes
\begin{equation}\nonumber
\begin{split}
\tilde K&\,=\frac{1}{\tilde g(\hat\xi,\hat\xi)^2}\{\mu K+\mu^2K(\langle\hat\xi,\xi\rangle^2+\langle\hat\xi,J\xi\rangle^2)\}\\
&\,\,\,\,\,\,\,+\frac{1}{\tilde g(\hat\xi,\hat\xi)}\cdot 2\mu K(\langle\hat\xi,\xi\rangle^2+\langle\hat\xi,J\xi\rangle^2)-4\psi^2.
\end{split}
\end{equation}

Now, since $\langle\hat\xi,J\xi\rangle=0$ and 
\begin{equation}\nonumber 
 \begin{split}
\tilde g(\hat\xi,\hat\xi)&\,=\mu\langle\hat\xi,\hat\xi\rangle+\mu^2\big(\theta_{\xi}^2(\hat\xi,\hat\xi\big)+\theta_{J\xi}^2(\hat\xi,\hat\xi\big)\\
&\,=\mu+\mu^2\langle\hat\xi,\xi\rangle^2=\mu+\mu^2|\xi|^2=c\mu^2,
 \end{split}
\end{equation}
we obtain, after some simple algebra,
\begin{equation}\nonumber
\begin{split}
\tilde K&\,=\frac{1}{c^2\mu^4}\big(\mu+\mu^2|\xi|^2\big)K+\frac{2\mu K}{c\mu^2}|\xi|^2-4\psi^2\\
&\,=\Big(\frac{c+|\xi|^2}{c\mu}\Big)K-4\psi^2.
\end{split}
\end{equation}
By continuity, this last formula relating $K$ and $\tilde K$ holds in all of $M$, and not just in $M\setminus\xi^{-1}(0)$. 

We now apply Gauss-Bonnet Theorem twice, with the aid of \eqref{eq:volume element}:
\begin{equation}\nonumber 
 \begin{split}
2\pi\mathcal X(M)&\,=\int_M\tilde Kd_{\tilde g}M=\int_M\Big[\Big(\frac{c+|\xi|^2}{c\mu}\Big)-4\psi^2\Big]c\mu^2d_gM\\
&\,=\int_M\Big(\frac{c+|\xi|^2}{c-|\xi|^2}\Big)Kd_gM-4c\int_M\psi^2\mu^2d_gM\\
&\,\leq\int_MKd_gM=2\pi\mathcal X(M).
\end{split}
\end{equation}

Thus, the inequality above must be an equality, which implies $K\equiv 0$ and $\psi\equiv 0$ and, in turn, $\mathcal X(M)=0$. This means that $M$ is diffeomorphic to a torus and $\xi$ is parallel. Also, since $\nabla J=0$, we get that $J\xi$ is also parallel. Therefore, $|\xi|=|J\xi|$ are constant on $M$, and since $\xi$ is nontrivial, both these vector fields do not vanish on $M$. 

Since $M$ is diffeomorphic to a torus, a theorem of Cartan assures the existence of closed geodesics $\gamma_1$ and $\gamma_2$ in $M$, representing the free homotopy classes of a set of generators of $\pi_1(M)$. Letting the flow of $\xi$ act (by isometries, since $\xi$ is parallel) on $\gamma_1$ and $\gamma_2$, we can assume that both of them start and end at the same point $p$ of $M$, so that $\pi_1(M;p)$ is generated by $[\gamma_1]$ and $[\gamma_2]$. If $\gamma_1'(0)=\alpha\xi+\beta J\xi$, for some $\alpha,\beta>0$, then the parallelisms of $\xi$ and $J\xi$ give that $\gamma_1'=\alpha\xi+\beta J\xi$ along $\gamma_1$, so that $\gamma_1$ is a geodesic loop based at $p$. Accordingly, so is $\gamma_2$.

Since $K\equiv 0$ and $K$ is diffeomorphic to a torus, its universal covering, furnished with the covering metric, is $\mathbb R^2$ with its standard flat metric. Let $\pi:\mathbb R^2\rightarrow M$ be the covering map, $o\in\pi^{-1}(p)$ and $\tilde\gamma_1$ and $\tilde\gamma_2$ be the liftings of $\gamma_1$ and $\gamma_1$ passing through $p$, respectively. Since $\pi$ is a local isometry, $\tilde\gamma_1$ and $\tilde\gamma_2$ are straight lines through $o$. Letting $\tilde\gamma_1(0)=\tilde\gamma_2(0)=o$, we can assume that $d\pi_o$ maps $\tilde\gamma_j'(0)$ to $\gamma_j'(0)$, for $j=1,2$, as well as that $\angle(\tilde\gamma_1'(0),\tilde\gamma_2'(0))=\angle(\gamma_1'(0),\gamma_2'(0))$ as oriented angles. Suppose further (also without loss of generality) that $\gamma_1$ and $\gamma_2$ (and so $\tilde\gamma_1$ and $\tilde\gamma_2$) are normalized, and let $\ell(\gamma_j)=a_j$, for $j=1,2$. If $\overrightarrow u_1$ and $\overrightarrow u_2$ are the vectors in $\mathbb R^2$ that go from $o$ to $\tilde\gamma_1(a_1)$ and $\tilde\gamma_2(a_2)$, respectively, and $\mathcal L$ is the lattice in $\mathbb R^2$ generated by $\overrightarrow u_1$ and $\overrightarrow u_2$, then a standard covering argument shows that $M$ is isometric to the flat torus $\mathbb R^2/\text{Deck}(\pi)$, quotient of the fundamental domain of $\mathcal L$ by $\pi$.

\end{proof}

\begin{remark}
The nonpositivity of the holomorphic sectional curvature and the compactness of $M$ cannot be relaxed, as shown by the examples of $\mathbb S^2$ and $\mathbb R^2$, both of which possess nonparallel closed conformal vector fields. 
\end{remark}

We now turn the case of a complete k\"ahlerian manifold $M^n$ of complex dimension $n>1$, and first recall (cf. Remark \ref{rem:zeros isolados}) thet if it has a closed conformal vector field $\xi\in\mathfrak X(M)\setminus\{0\}$, then the zeros of $\xi$ are all isolated. 

\begin{lemma}\label{lemma:folhas sao totalmente geodesicas}
Let $n>1$ and $(M^n,g,J)$ be a k\"ahlerian manifold furnished with a closed conformal vector field $\xi\in\mathfrak X(M)\setminus\{0\}$. Then, the distribution $\mathcal D$ generated by $\xi$ and $J\xi$ is involutive in $M\setminus\xi^{-1}(0)$, and its leaves are totally geodesic and k\"ahlerian in the induced metric.
\end{lemma}

\begin{proof}
It follows from \eqref{eq:parallelism of J in Kahler manifolds} and the closed conformal character of $\xi$ that
$$[\xi,J\xi]=\nabla_{\xi}J\xi-\nabla_{J\xi}\xi=J\nabla_{\xi}\xi-\psi J\xi=0.$$
Moreover, if $\Sigma$ is a leaf of $\mathcal D$ and we let $\mathcal N$ denote the Nijenhuis tensor of $\Sigma$, it is immediate to see that
$\mathcal N=0$, so that $\Sigma$ is a complex curve in $(M^n,J,g)$ and the restriction of $g$ to $\Sigma$ (which we shall also call $g$) is k\"ahlerian. 

Now, let $\alpha$ be the second fundamental form of $\Sigma$ and $(\cdot)^{\bot}$ denote orthogonal projection onto $T(\Sigma,g)^{\bot}$. Then,
$$\alpha(\xi,J\xi)=(\nabla_{\xi}J\xi)^{\bot}=(J\nabla_{\xi}\xi)^{\bot}=(\psi J\xi)^{\bot}=0.$$
\end{proof}

\begin{lemma}\label{lemma:anulamento do fator conforme}
Let $n>1$ and $(M^n,g,J)$ be a connected k\"ahlerian manifold furnished with a closed conformal vector field $\xi\in\mathfrak X(M)\setminus\{0\}$. If there exists a nontrivial leaf $\Sigma$ of the distribution $\mathcal D$ generated by $\xi$ and $J\xi$ such that the conformal factor of $\xi$ vanishes on $\Sigma$, then it vanishes on $M$. In particular, $\xi$ and $J\xi$ are parallel and do not vanish at any point of $M$.
\end{lemma}

\begin{proof}
By hypothesis, we have $\xi\neq 0$ on $\Sigma$. For $p\in\Sigma$, take a neighborhood $V$ of $p$ in $M\setminus\xi^{-1}(0)$ such that there exists in $V$ a smooth vector field $X$, with $X\bot\xi,J\xi$. Letting $\psi$ stand for the conformal factor of $\xi$ and $\alpha$ be an integral curve of $X$ starting at $q\in V\cap\Sigma$, it follows from \eqref{eq:gradiente de psi onde xi nao se anula} that
$$\frac{d}{dt}\psi(\alpha(t))=\langle\nabla\psi(\alpha(t)),\alpha'(t)\rangle=-\langle\text{Ric}(\hat\xi)\xi,X\rangle_{\alpha(t)}=0.$$
Hence, $\psi$ is constant along $\alpha$ and, since $\psi(q)=0$, we get that $\psi\equiv 0$ along $\alpha$. Since $X$ was arbitrarily chosen subjected to the condition $X\bot\xi,J\xi$, we conclude that $\psi\equiv 0$ in a neighborhood of $p$ in $M\setminus\xi^{-1}(0)$. 

Now, the discussion on Section $1$ of \cite{Obata:70} assures that $\psi$ and $\xi$ are uniquely determined by the values of $\psi$, $\nabla\psi$, $\xi$ and $\nabla\xi$ at a single point of $M$. Therefore, since $\psi$ vanishes on an open subset of $M\setminus\xi^{-1}(0)$, and (as we have observed above, for $n>1$) such a set is connected, we conclude that $\psi\equiv 0$ on $M\setminus\xi^{-1}(0)$. However, since $\xi^{-1}(0)$ is a set of isolated points, we actually have that $\psi\equiv 0$ on $M$. In turn, this shows that both $\xi$ and $J\xi$ (since $\nabla J=0$) are parallel on $M$.

Finally, for $X\in\mathfrak X(M)$ we have $X\langle\xi,\xi\rangle=2\langle\nabla_X\xi,\xi\rangle=2\langle\psi X,\xi\rangle=0$, so that $|\xi|^2$ is constant on $M$. Since $\xi=0$ at most at a set of isolated points, this implies that $|J\xi|=|\xi|$ is constant and positive on $M$.
\end{proof}
 
We shall need one more auxiliary result. 

\begin{lemma}\label{lemma:on parallel vector fields}
Let $M$ be a riemannian manifold, $\eta\in\mathfrak X(M)\setminus\{0\}$ be a parallel and complete vector field with flow $\Phi:\mathbb R\times M\rightarrow M$ and $\alpha:[0,1]\rightarrow M$ be a geodesic preserved by $\Phi$. If $P_{\alpha}:T_{\alpha(0)}M\rightarrow T_{\alpha(1)}M$ stands for the parallel transport along $\alpha$, then $P_{\alpha}=(d\Phi^1)_{\alpha(0)}$.
\end{lemma}

\begin{proof}
Let $p=\alpha(0)$ and $v\in T_pM$. It suffices to show that $t\mapsto(d\Phi^t)_p(v)$ is parallel along $\alpha$. To this end, let $\delta:(-\epsilon,\epsilon)\rightarrow M$ be such that $\delta(0)=p$ and $\delta'(0)=v$. Then, 
\begin{equation}\nonumber
 \begin{split}
\frac{D}{dt}(d\Phi^t)_p(v)&\,=\frac{D}{dt}\frac{\partial}{\partial s}\Phi(t,\delta(s))\Big|_{s=0}=\frac{D}{ds}\frac{\partial}{\partial t}\Phi(t,\delta(s))\Big|_{s=0}\\
&\,=\frac{D}{ds}\eta\big(\Phi(t,\delta(s))\big)\Big|_{s=0}=\Big(\nabla_{\frac{d}{ds}\Phi(t,\delta(s))}\eta\Big)_{\Phi(t,p)}=0,
 \end{split}
\end{equation}
since $\eta$ is parallel.
\end{proof}

For what follows, we recall that, given a connected submanifold $N$ of a riemannian manifold $N$, $p\in N$ and a closed and piecewise differentiable curve $\alpha:[0,1]\rightarrow N$ such that $\alpha(0)=p$, parallel translation along $\alpha$ defines a linear operator $P_{\alpha}:T_pN^{\bot}\rightarrow T_pN^{\bot}$. More precisely, for $v\in T_pN^{\bot}$, we set $P_{\alpha}(v)=V(1)$, where $t\mapsto V(t)$ is the parallel transport of $v$ along $\alpha$. It is immediate to check that the set of such linear operators, furnished with the product $P_{\alpha}\cdot P_{\beta}=P_{\alpha\cdot\beta}$, form a closed subgroup of $O(T_pN^{\bot})$, called the normal holonomy subgroup of $N$ at $p$ and denoted by $\text{Hol}_p^{\bot}(N)$ (for more details, see Chapter $4$ of \cite{Berndt:03}). If $q\in N$ and $\delta$ is a piecewise smooth curve in $N$ joining $p$ and $q$, we have $\text{Hol}_p^{\bot}(N)\simeq\text{Hol}_q^{\bot}(N)$ via $P_{\alpha}\mapsto P_{\delta^{-1}\cdot\alpha\cdot\delta}$. Therefore, from now on we shall refer to the normal holonomy group of $N$, which will henceforth be denoted by $\text{Hol}^{\bot}(N)$.

We have finally arrived at our second main result.

\begin{theorem}\label{thm:main theorem}
Let $n>1$ be an integer and $(M^n,g,J,\nabla)$ be a connected, complete k\"ahlerian manifold furnished with a closed, conformal and nontrivial vector field $\xi$. Let $\mathcal D$ denote the distribution in $M\setminus\xi^{-1}(0)$ generated by $\xi$ and $J\xi$, and assume that $\mathcal D$ has a compact leaf $\Sigma$. If the holomorphic sectional curvature of $M$ along $\Sigma$ is nonpositive and $\text{\rm Hol}^{\bot}(\Sigma)$ is a torsion group, then:
\begin{enumerate}
 \item[(a)] $\xi^{-1}(0)=\emptyset$ and both $\xi$ and $J\xi$ are parallel along $M$.
 \item[(b)] The leaves of $\mathcal D$, furnished with the induced metric, form a family of totally geodesic isometric tori.
 \item[(c)] The distribution $\mathcal D^{\bot}$ is integrable and, in the induced metric, their leaves are  totally geodesic, isometric complete k\"ah\-ler\-ian manifolds of complex dimension $n-1$.
 \end{enumerate}
\end{theorem}

\begin{proof}
Lemma \ref{lemma:folhas sao totalmente geodesicas} assures that $\Sigma$ is totally geodesic in $M$, so that its gaussian curvature $K_{\Sigma}$ coincides with the holomorphic sectional curvature of $M$ along $\Sigma$. Therefore, $K_{\Sigma}\leq 0$, and we can apply Theorem \ref{thm:the compact case} to conclude that $\Sigma$ is isometric to a flat torus (hence, $K_{\Sigma}\equiv 0$) and the conformal factor $\psi$ of $\xi$ vanishes along it. Lemma \ref{lemma:anulamento do fator conforme} then shows that $\psi\equiv 0$ on $M$, $\xi^{-1}(0)=\emptyset$ and $\xi$ and $J\xi$ are parallel along $M$. This establishes (a).

For (b), let $d_M$ stand for the riemannian distance on $M$. Given $q\in M$, the compactness of $\Sigma$ assures the existence of a point $p\in\Sigma$ such that $d_M(p,q)=d_M(p;\Sigma)$. Since $M$ is complete, Hopf-Rinow theorem guarantees the existence of a normalized geodesic $\gamma:[0,l]\rightarrow M$ such that $\gamma(0)=p$, $\gamma(l)=q$ and $l=\ell(\gamma)=d_M(p;q)$. In particular, it is a well known fact that $\gamma'(0)\bot T_p\Sigma$, so that $\langle\gamma'(0),\xi_p\rangle=\langle\gamma'(0),J\xi_p\rangle=0$. Now, the parallelism of $\xi$ assures that, along $\gamma$,
$$\frac{d}{ds}\langle\gamma',\xi\rangle_{\gamma(s)}=\langle\frac{D\gamma'}{ds},\xi\rangle_{\gamma(s)}+\langle\gamma',\frac{D\xi}{ds}\rangle_{\gamma(s)}=0.$$
Then, $\langle\gamma',\xi\rangle$ is constant along $\gamma$, so that $\langle\gamma',\xi\rangle_q=\langle\gamma',\xi\rangle_p=0$. Analogously, $\langle\gamma',J\xi\rangle_q=0$. On the other hand, if $\Sigma_q$ is the leaf of $\mathcal D$ passing through $q$, then $T_q\Sigma_q$ is generated by $\xi_q$ and $J\xi_q$, so that $\gamma'(l)\bot T_q\Sigma_q$.

By Lemma \ref{lemma:folhas sao totalmente geodesicas}, $\Sigma_q$ is totally geodesic in $M$. For $v\in T_q\Sigma_q$, the maximal geodesic of $\Sigma_q$ departing from $q$ with velocity $v$ coincides to that of $M$, which is complete. Hence, $\Sigma_q$ is also complete. Let $K_{\Sigma_q}$ stand for the gaussian curvature of $\Sigma_q$ and $K(\xi,J\xi)$ for the holomorphic sectional curvature of $M$ along the planes generated by $\xi$ and $J\xi$.  Letting $R$ denote the curvature operator of $M$, the parallelisms of $\xi$ and $J\xi$ give $R(J\xi,\xi)\xi=0$, so that $K(\xi,J\xi)\equiv 0$. However, since $\Sigma_q$ is totally geodesic in $M$, we conclude that $K_{\Sigma_q}=K(\xi,J\xi)_{|\Sigma_q}=0$. Therefore, being a connected, complete flat surface, $\Sigma_q$ is isometric to a torus, a plane or a cylinder over a plane curve. In what comes next, we shall show that is is isometric to a torus.

The proof of Theorem \ref{thm:the compact case} assures the existence of geodesic loops $\alpha_1$ and $\alpha_2$ in $\Sigma$, based at $p$ and such that $\pi_1(\Sigma;p)$ is generated by $[\alpha_1]$ and $[\alpha_2]$. Let $\alpha=\alpha_1:[0,1]\rightarrow\Sigma$ and write $\alpha'(0)=v=a\xi_p+bJ\xi_p$, for some $a,b\in\mathbb R$. If $v=a\xi_q+bJ\xi_q$, we already know that $v\in T_q\Sigma_q$. If $\beta$ is the geodesic of $\Sigma_q$ departing from $q$ with velocity $v$, we claim that $\beta$ is a geodesic loop, with lenght $\ell(\beta)=\ell(\alpha)$. We shall prove this in two steps:\\

\noindent (i) Let $w=\gamma'(0)\in T_p^{\bot}\Sigma$ and $\eta=a\xi+bJ\xi$, which is a parallel and (by the completeness of $M$) complete vector field in $M$. If $\Phi:\mathbb R\times M\rightarrow M$ is the flow of $\eta$, then $\Phi^t:M\rightarrow M$ is an isometry, so that $\gamma^t(s):=\Phi(t,\gamma(s))$ is a geodesic of $M$ joining $\alpha(t)$ to $\beta(t)$. The parallelism of $\eta$ assures, through Lemma \ref{lemma:on parallel vector fields}, that $w(t)=(d\Phi^t)_pw$ is parallel along $\alpha$ and normal to $\Sigma$. Hence, $w(1)=P_{\alpha}(w)$, with $P_{\alpha}\in\text{Hol}^{\bot}(\Sigma)$. Since this is a torsion group by hypothesis, there exists an integer $m\geq 1$ such that $P_{\alpha}^m=Id:T_p^{\bot}\Sigma\rightarrow T_p^{\bot}\Sigma$; in particular, $P_{\alpha}^m(w)=w$. However, since $P_{\alpha}^m=P_{\alpha\cdot\ldots\cdot\alpha}$ ($m$ times), if we set $c=\ell(\alpha\cdot\ldots\cdot\alpha)=m\ell(\alpha)$ we conclude that $\gamma^c:[0,l]\rightarrow M$ is a geodesic joining $p=(\alpha\cdot\ldots\cdot\alpha)(0)=p$ to $\beta(c)$ and with initial velocity $w$. Then, by uniqueness we get $\gamma^c=\gamma$, so that $\beta(c)=\gamma^c(l)=\gamma(l)=q$. Since $\beta'(0)$ is parallel to $\alpha'(0)$ along $\gamma$, $\beta'(c)$ is parallel to $\alpha'(c)$ along $\gamma^c$ and $\alpha'(0)=\alpha'(c)$, $\gamma=\gamma^c$, we conclude that $\beta'(0)=\beta'(c)$.\\

\noindent (ii) For $0\leq t\leq l$, let $v_t\in\Sigma_{\gamma(t)}$ denote the parallel transport of $v$ along $\gamma_{|[0,t]}$ and (and according to the discussion in (i)) $\delta_t$ be the geodesic loop that departs from $\gamma(t)$ with initial velocity $v_t$, so that $\delta_0=\alpha$ and $\delta_l=\beta$. Since $\exp:TM\rightarrow M$ is smooth, the function $t\mapsto\ell(\delta_t)$ is continuous; on the other hand (also from (i)), $\frac{\ell(\delta_t)}{\ell(\alpha)}\in\mathbb Z$ for each $t\in[0,l]$. In particular, $\ell(\beta)=\ell(\delta_l)=\ell(\delta_0)=\ell(\alpha)$.\\

The argument in (i) guarantees that the geodesics $\beta_1$ and $\beta_2$ of $\Sigma_q$, obtained from $\alpha_1$ and $\alpha_2$ by parallel transporting $\alpha_1'(0)$ and $\alpha_2'(0)$ along $\gamma$, are distinct geodesic loops in $\Sigma_q$. Since a plane has no closed geodesics and a cylinder over a plane curve has only one geodesic loop through each one of its points, we conclude that $\Sigma_q$ is a torus. Also, it follows by construction that $\langle\alpha_1'(0),\alpha_2'(0)\rangle=\langle\beta_1'(0),\beta_2'(0)\rangle$, and (ii) assures that $\ell(\alpha_1)=\ell(\beta_1)$ and $\ell(\alpha_2)=\ell(\beta_2)$. Therefore, the last part of the proof of Theorem \ref{thm:the compact case} assures that $\Sigma_q$ is isometric to $\Sigma$.

We now turn to (c). If $X$ and $Y$ are smooth vector fields in $\mathcal D^{\bot}$, then the parallelisms of $\xi$ and $J\xi$ give
$$\langle\nabla_XY,\xi\rangle=X\langle Y,\xi\rangle-\langle Y,\nabla_X\xi\rangle=0,$$
and accordingly $\langle\nabla_XY,J\xi\rangle=0$. In particular, $[X,Y]\in\mathcal D^{\bot}$, so that $\mathcal D^{\bot}$ is integrable. 

Letting $N$ denote a leaf of $\mathcal D^{\bot}$ and $\alpha$ its second fundamental form, we have
$$\alpha(X,Y)=\langle\nabla_XY,\xi\rangle\frac{\xi}{|\xi|^2}+\langle\nabla_XY,J\xi\rangle\frac{J\xi}{|\xi|^2}=0,$$
and $N$ is totally geodesic in $M$. The completeness of $N$ (in the induced metric) now follows from that of $M$, together with the fact that geodesics in $N$ are also geodesics in $M$.

It is immediate to check that $X\in\mathfrak X(N)\Rightarrow JX\in\mathfrak X(N)$. Therefore, $J$ is an almost complex structure on $N$, and the fact that the Levi-Civita connection of $N$ is the restriction of that of $M$ guarantees that $J$ is parallel on $N$. Finally, since the Nijenhuis tensor of $N$ is the restriction of that of $M$, which vanishes identically, we conclude that $N$ is a k\"ahlerian manifold in the induced metric. 

For the last part, we argue pretty much as in (i). To this end, let $N_1$ and $N_2$ be two distinct leaves of $\mathcal D^{\bot}$, and take $p_1\in N_1\cap\Sigma$ and $p_2\in N_2\cap\Sigma$. Let $\delta:[0,a]\rightarrow\Sigma$ be a geodesic of $\Sigma$ joining $p_1$ to $p_2$, and $\delta'(0)=a\xi_{p_1}+bJ\xi_{p_1}$ for some $a,b\in\mathbb R$. The parallelisms of $\xi$ and $J\xi$ assure that $\delta'$ is the restriction of the parallel (hence, complete) vector field $\eta=a\xi+bJ\xi$ to $\delta$.  If $\Phi:\mathbb R\times M\rightarrow M$  denotes the flow of $\eta$, then $\Phi^a:M\rightarrow M$ is an isometry such that $\Phi^a(p_1)=p_2$ and $(d\Phi^a)_{p_1}(T_{p_1}N_1)=T_{p_2}N_2$. Since $N_1$ and $N_2$ are connected, complete and totally geodesic in $M$, an argument pretty much like the one presented in the proof of (b) guarantees that $\Phi^a$ applies geodesics in $N_1$ to geodesics in $N_2$. Hence, $\Phi^a(N_1)\subset N_2$ and, likewise, $\Phi^{-a}(N_2)\subset N_1$. Thus, $\Phi^a(N_1)=N_2$.
\end{proof}

\begin{corollary}\label{coro:of the main theorem}
Let $n>1$ be an integer and $(M^n,g,J,\nabla)$ be a connected, complete k\"ahlerian manifold satisfying the hypotheses of the previous result. If $\tilde M$ stands for the universal covering of $M$, furnished with the covering metric, then $\tilde M$ is isometric to a riemannian product $\tilde N\times\mathbb R^2$, where $\tilde N$ is a connected, simply connected, complete k\"ahlerian manifold.
\end{corollary}

\begin{proof}
Letting $\tilde g$ denote the covering metric, the covering map $\pi:\tilde M\rightarrow M$ turns into a local isometry, so that $\tilde M$ is naturally a k\"ahlerian manifold. Moreover, the orthogonal foliations on $M$ lift to two orthogonal foliations of $\tilde M$ with totally geodesic leaves, and one of which has leaves isometric to $\mathbb R^2$. Since $\tilde M$ is simply connected, it now suffices to apply the complex version of the De Rham decomposition theorem.
\end{proof}

We finish the paper by presenting a generic example that, in a certain sense, sheds light into the assumption of nonpositivity of the holomorphic sectional curvature of $M$ along $\Sigma$, in Theorem \ref{thm:main theorem}.

\begin{example}\label{exam:kahlerianas sem campo conforme fechado}
Let $(M^n,g,J)$ be a compact complex symmetric space of positive scalar curvature ($n$ being its complex dimension). If $H^1_{dR}(M)=\{0\}$ and $M$ is not isometric to $\mathbb S^{2n}$, then $M$ does not possess a nontrivial closed conformal vector field. Indeed, if $\xi\in\mathfrak X(M)$ was such a vector field, then, since its metrically dual $1-$form $\theta_\xi$ is closed, it would generate an element $[\theta_{\xi}]\in H^1_{dR}(M)$, so that $[\theta_{\xi}]=0$. In turn, this would imply that $\xi=\nabla f$, for some smooth function $f$ on $M$. Then, the compactness of $M$ would give at least two zeros for $\xi$ in $M$. However, since $M$ has constant (and, by assumption, positive) scalar curvature, Theorem $1$ of \cite{Tanno:69} would imply that $M$ should be isometric to the euclidean sphere $\mathbb S^{2n}$, which is a contradiction.

A particular instance of such a situation is that of $\mathbb C\mathbb P^n$ furnished with the Fubini-Study metric. Actually, if $(M^n,g,J)$ is a compact connected k\"ahlerian manifold of positive constant holomorphic sectional curvature ($n$ being its complex dimension), then $M$ does not posses a nontrivial closed conformal vector field too. Indeed, the universal covering of $M^n$, furnished with the covering metric is isometric to $\mathbb C\mathbb P^n$ furnished with the Fubini-Study metric; if there existed a nontrivial closed conformal vector field in $M$, this could be lifted to a nontrivial closed conformal vector field in $\mathbb C\mathbb P^n$, which is impossible.
\end{example}

\end{document}